\newtheorem{theorem}{Theorem}[section]
\newtheorem{corollary}[theorem]{Corollary}
\newtheorem{lemma}[theorem]{Lemma}
\newtheorem{proposition}[theorem]{Proposition}
\theoremstyle{definition}
\theoremstyle{remark}
\newtheorem{rem}[theorem]{Remark}
\newtheorem{example}[theorem]{Example}
\numberwithin{equation}{section}
\def\ahalf{{\textstyle \frac{1}{2}}}
\def\athird{{\textstyle \frac{1}{3}}}
\def\inpr#1,#2{\t \hbox{\langle #1 , #2 \rangle} \t}
\def\ip<#1,#2>{\langle #1,#2 \rangle}
\def\mod#1{\left \vert #1 \right \vert}
\def\norm#1{\left \Vert #1 \right \Vert}
\def\paren(#1){\left( #1 \right)}
\def\ssnorm#1{\Vert #1 \Vert}
\def\sparen(#1){\Bigl ( #1 \Bigr )}
\def\ssparen(#1){ (#1) }
\def\st{\thinspace : \thinspace}
\def\t{\thinspace}
\def\xchi{{\raise0.15ex\hbox{$\chi$}}}
\newcommand{\U}{\mathcal{U}}
\newcommand{\C}{\mathbb{C}}
\newcommand{\R}{\mathbb{R}}
\newcommand{\<}{\left\langle}
\renewcommand{\>}{\right\rangle}
\newcommand{\AC}{{\rm AC}}
\newcommand{\BV}{{\rm BV}}
\newcommand{\var}{{\rm var}}
\begin{document}

\title{Extensions of an $AC(\sigma)$ functional calculus}%
\author{Ian Doust and Venta Terauds}%
\address{School of Mathematics and Statistics\\
University of New South Wales\\
UNSW Sydney 2052 Australia}%
\email{i.doust@unsw.edu.au}%

\address{School of Mathematical and Physical Sciences\\
University of Newcastle\\
University Drive\\
Callaghan NSW 2308 Australia} \email{Venta.Terauds@newcastle.edu.au}

\thanks{The second author would like to acknowledge the receipt of
an Australian Postgraduate Award. Some of this research was supported
by the Australian Research Council Discovery Grant DP0559097.}%
\subjclass{47B40}%

\date{}%
\begin{abstract}
On a reflexive Banach space $X$, if an operator $T$ admits a
functional calculus for the absolutely continuous functions on its
spectrum $\sigma(T) \subseteq \mathbb{R}$, then this functional
calculus can always be extended to include all the functions of
bounded variation. This need no longer be true on nonreflexive
spaces. In this paper, it is shown that on most classical separable
nonreflexive spaces, one can construct an example where such an
extension is impossible. Sufficient conditions are also given which
ensure that an extension of an $\AC$ functional calculus is possible
for operators acting on families of interpolation spaces such as the
$L^p$ spaces.
\end{abstract}
\maketitle
\section{Introduction}

Given an operator $T$ on a Banach space $X$, it is often important
to be able to identify algebras of functions $\U$ for which one may
sensibly assign a meaning to $f(T)$ for all $f\in\U$. In many
classical situations, the possession of a functional calculus for a
small algebra is enough to ensure an extension of the functional
calculus map to a large algebra. For example, many proofs of the
spectral theorem for normal operators on a Hilbert space first show
that such an operator $T$ must admit a $C(\sigma(T))$ functional
calculus, and then proceed to extend this functional calculus to all
bounded Borel
measurable functions. 

Whether a functional calculus for an operator $T$ has a nontrivial
extension depends crucially on the space on which $T$ acts. For
example, the operator $Tx(t) = t x(t)$ has an obvious $C[0,1]$
functional calculus on both $X=C[0,1]$ and on $X = L^\infty[0,1]$,
but only in the latter case does a nontrivial extension exist. Many
of the positive theorems that exist in this area come as easy
corollaries of theorems which show that a particular functional
calculus is sufficient to ensure that an operator admits an integral
representation with respect to a family of projections. The
integration theory for these families then provides a natural
extension of the original functional calculus. Often however, it is
sufficient to know that one has a large family of projections which
commutes with $T$, and one is less concerned with the topological
properties which are typically required of such families in order to
produce a satisfactory integration theory. In this case, what one is
interested in is whether one can show that $T$ has a functional
calculus for an algebra which contains a large number of idempotent
functions.

An operator with a (norm bounded) functional calculus for the
absolutely continuous functions on a compact set $\sigma \subseteq
\C$ is said to be an $\AC(\sigma)$ operator. (We refer the reader to
\cite{AD1} for the definitions of the function spaces $\AC(\sigma)$
and $\BV(\sigma)$.) In the case where $\sigma \subseteq \R$, such
operators have been more commonly referred to as well-bounded
operators (see \cite{Dow}), although we prefer the more descriptive
term real $\AC(\sigma)$ operators. In this case, as the polynomials
are dense in $\AC(\sigma)$, the $\AC(\sigma)$ functional calculus is
necessarily unique. Real $\AC(\sigma)$ operators were introduced by
Smart and Ringrose \cite{Sm,R} in order to provide a theory which
had similarities to the theory of self-adjoint operators, but which
dealt with the conditionally convergent spectral expansions which
are more common once one leaves the Hilbert space setting.

If $X$ is reflexive, or more generally if the functional calculus
has a certain compactness property, then every real $\AC(\sigma)$
operator $T \in B(X)$ admits an integral representation with respect
to a spectral family of projections on $X$. A real $\AC(\sigma)$
operator with such a representation is said to be of type~(B). The
integration theory for spectral families shows that one may extend
the functional calculus to the idempotent rich algebra $\BV(\sigma)$
of all functions of bounded variation on $\sigma$. That is, there
exists a norm continuous algebra homomorphism $\Psi: \BV(\sigma) \to
B(X)$ such that $\Psi(f) = f(T)$ for all $f \in \AC(\sigma)$.  The
extent to which this theory can be extended to the case where
$\sigma \not\subseteq \mathbb{R}$ is not yet known, and so in this
paper we shall restrict our attention almost exclusively to case
where $\sigma \subseteq \mathbb{R}$. (We refer the reader to
\cite{Dow} for the basic integration theory of real $\AC(\sigma)$
operators.)

There are many examples of $\AC(\sigma)$ operators on nonreflexive
spaces which admit a $\BV(\sigma)$ despite failing to have a
spectral family decomposition. It is natural to ask whether there
are any nonreflexive spaces on which every $\AC(\sigma)$ operator
admits an extended functional calculus, or whether there are any
easily checked conditions which might ensure that such an extension
exists. The aim of this paper is twofold. First we show that if $X$
contains a complemented copy of $c_0$ or a complemented copy of
$\ell^1$, then there is an operator $T \in B(X)$ which admits an
$\AC(\sigma)$ functional calculus which does not have any extension
to $\BV(\sigma)$. In the second half of the paper we shall give
sufficient conditions for an extension of the functional calculus
which apply to linear transformations that act as operators on a
range of $L^p$ spaces.

It was shown in \cite{interpex} that if $(\Omega,\Sigma,\mu)$ is a
finite measure space and $T$ is a real $\AC(\sigma)$ operator on
$L^1(\Omega,\Sigma,\mu)$ and $L^p(\Omega,\Sigma,\mu)$ for any $p >
1$ then $T$ admits a spectral family decomposition on
$L^1(\Omega,\Sigma,\mu)$ and consequently $T$ has a $\BV(\sigma)$
functional calculus on that space. The hypothesis that $T$ be an
$\AC(\sigma)$ operator on some $L^p$ space other than $L^1$ is vital
here; the operator
  \[ Tu(t) = tu(t) + \int_0^t u(s) \, ds, \qquad t \in [0,1], \]
is a real $\AC[0,1]$ operator on $L^1[0,1]$, but it does not admit a
$\BV[0,1]$ functional calculus \cite[p 170]{DdeLaub}. As we shall
show in section~\ref{S-Extrap}, the hypothesis that $\mu(\Omega)$ be
finite can be omitted if one only wishes to deduce the existence of
a $\BV(\sigma)$ functional calculus.

More delicate is the situation for operators acting on $L^\infty$,
and there are several open questions that remain. In practice,
however, concrete operators on this space often have additional
properties which enable one to establish that an extended functional
calculus exists. This will be examined in more detail in
section~\ref{S-Linf}.

Some care needs to be taken in addressing these questions. Even on
Hilbert space, extensions need not be unique. For example, the
operator $T$ on $\ell^2$,
  \[ T(x_0,x_1,x_2,\dots) = (0,x_1,\frac{x_2}{2},\frac{x_3}{3},\dots) \]
admits an $\AC(\sigma(T))$ functional calculus, but both the maps
  \begin{align*}
  \Phi_1(f)(x_0,x_1,x_2,\dots) &= (f(0)x_0,f(1)x_1,f(\tfrac{1}{2}) x_0,\dots) \\
  \Phi_2(f)(x_0,x_1,x_2,\dots)
    &= (\lim_{n \to \infty} f(\tfrac{1}{n})x_0,f(1)x_1,f(\tfrac{1}{2}) x_0,\dots)
  \end{align*}
are bounded algebra homomorphisms from $\BV(\sigma(T))$ to
$B(\ell^2)$ which extend the $\AC(\sigma(T))$ functional calculus.


\section{Nonreflexive spaces on which an extension need not
exist}\label{no-extensions}

There are various examples of real $\AC(\sigma)$ operators in the
literature which do not admit any $\BV(\sigma)$ functional calculus
extension (see, for example, \cite{DdeLaub}). We are not aware
however of any places where the impossibility of an extension is
explicitly proven. Note that this requires more than just showing
that the formula defining $f(T)$ for $f \in \AC(\sigma)$ doesn't
work for $f \in \BV(\sigma)$.

Suppose that $T \in B(X)$ is a real $\AC(\sigma)$ operator which has
a $\BV(\sigma)$ functional calculus. Suppose $\lambda \in \sigma$.
The following standard calculation is based on results such as
Theorem~15.8 of \cite{Dow}, or Theorem 1.4.10 of \cite{LN}.

Let $\xchi_L = \xchi_{\sigma \cap (-\infty,\lambda]}$ and $\xchi_R =
\xchi_{\sigma \cap (\lambda,\infty]}$ so that $\xchi_L + \xchi_R =
1$ in $\BV(\sigma)$. Let $P = P_\lambda = \xchi_L(T)$ and $Q = I-P =
\xchi_R(T)$. Define subsets $L_\lambda, R_\lambda \subseteq
\AC(\sigma)$ by
  \begin{align*}
  L_\lambda &= \{f \in \AC(\sigma) \st \hbox{$f(t) = 0$ for $0 \le t \le
  \lambda$}\}, \\
  R_\lambda &= \{f \in \AC(\sigma) \st \hbox{$f(t) = 0$ for $\lambda \le t \le
  1$}\}.
  \end{align*}

\begin{proposition} The ranges of the projections $P$ and $Q$ satisfy
  \begin{align*}
  PX &\subseteq \{ x \in X \st \hbox{$f(T)x =0$ for all $f \in L_\lambda$}\},
  \\
  QX &\subseteq \{ x \in X \st \hbox{$f(T)x =0$ for all $f \in R_\lambda$}\}.
  \end{align*}
\end{proposition}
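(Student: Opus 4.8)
The plan is to use that a $\BV(\sigma)$ functional calculus for $T$ is, by definition, a bounded algebra homomorphism $\Psi : \BV(\sigma) \to B(X)$ extending the $\AC(\sigma)$ functional calculus, so that $\Psi(f) = f(T)$ for every $f \in \AC(\sigma)$, while $\Psi(\xchi_L) = P$ and $\Psi(\xchi_R) = Q$. The whole proposition then reduces to a single elementary observation about pointwise products in $\BV(\sigma)$.

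First I would fix $f \in L_\lambda$. Since $f$ vanishes on $\sigma \cap (-\infty,\lambda]$ whereas $\xchi_L$ vanishes on $\sigma \cap (\lambda,\infty)$, the supports of $f$ and $\xchi_L$ meet $\sigma$ in disjoint sets, so the pointwise product $f\xchi_L$ is identically zero on $\sigma$. As the elements of $\BV(\sigma)$ are genuine functions, this says precisely that $f\xchi_L = 0$ in $\BV(\sigma)$. Applying the homomorphism $\Psi$, and using $\Psi(f) = f(T)$ (valid because $L_\lambda \subseteq \AC(\sigma)$), gives
\[ f(T)P = \Psi(f)\,\Psi(\xchi_L) = \Psi(f\xchi_L) = \Psi(0) = 0. \]
Hence $f(T)$ annihilates the range of $P$: for $y \in PX$ we have $y = Py$, so $f(T)y = f(T)Py = 0$. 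Since $f \in L_\lambda$ was arbitrary, this is exactly the first inclusion.

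The second inclusion is proved in the same way. For $f \in R_\lambda$ the function $f$ vanishes on $\sigma \cap [\lambda,\infty)$ while $\xchi_R$ vanishes on $\sigma \cap (-\infty,\lambda]$, so once more $f\xchi_R = 0$ in $\BV(\sigma)$, and therefore $f(T)Q = \Psi(f\xchi_R) = 0$, whence $f(T)$ annihilates $QX$.

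I do not expect a genuine obstacle here; the one point that needs care is verifying that $f\xchi_L$ (respectively $f\xchi_R$) really is the zero element of $\BV(\sigma)$, in particular at the endpoint $t = \lambda$ where $\xchi_L$ takes the value $1$. But $f(\lambda) = 0$ for every $f \in L_\lambda \cup R_\lambda$, so the products vanish there as well, and the support analysis above is correct at all points of $\sigma$. Once these two product identities are in hand, the argument is purely formal, relying only on $\Psi$ being multiplicative and extending the $\AC(\sigma)$ functional calculus.
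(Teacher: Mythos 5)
Your argument is correct and is essentially the paper's own proof: both hinge on the identity $f\xchi_L = 0$ (equivalently $f = f(1-\xchi_L)$) for $f \in L_\lambda$ and then apply multiplicativity of the $\BV(\sigma)$ homomorphism to conclude $f(T)P = 0$, with the symmetric argument for $Q$. No gaps.
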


\begin{proof}
Note that $f \in L_\lambda \iff f\xchi_L = 0 \iff f = f(1-\xchi_L)$.
Thus
 \[
  Px = x  \iff (I-P)x = 0
           \iff (1-\xchi_L)(T)x = 0 \]
and so if $Px = x$ and $f \in L_\lambda$, then
  $f(T)x = f(T)(1-\xchi_L)(T)x = 0$.
Since $f \in R_\lambda \iff f\xchi_R = 0 \iff f = f(1-\xchi_R)$, the
proof for $Q$ is identical.
\end{proof}

Throughout what follows let $\sigma_0 = \{0\} \cup
\{(-1)^k/k\}_{k=1}^\infty$.

\begin{proposition}\label{c0-example}
 There exists an operator $T$ on $c_0$, which
admits an $\AC(\sigma_0)$ functional calculus but no $\BV(\sigma_0)$
functional calculus.
\end{proposition}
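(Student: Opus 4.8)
The plan is to realise $T$ as a perturbation of the diagonal operator $D = \mathrm{diag}(\lambda_k)$ on $c_0$, where $\lambda_k = (-1)^k/k$, so that $\sigma(T) = \sigma_0$ and the $\AC(\sigma_0)$ calculus takes the form $f(T) = \sum_k f(\lambda_k) P_k$, the $P_k$ being the rank-one spectral idempotents of $T$. A purely diagonal $T$ will not do: then $f(T) = \mathrm{diag}(f(\lambda_k))$, so $\norm{f(T)} = \norm{f}_\infty \le \norm{f}_{\BV}$ and a $\BV(\sigma_0)$ calculus would exist. The $P_k$ must therefore be non-orthogonal and, crucially, not supported on finitely many coordinates: a block-diagonal construction also fails, since there each $\norm{P_k}$ would be forced to stay bounded (an $\AC$ function can ramp by a fixed amount across any single pair of eigenvalues, so an unbounded $\norm{P_k}$ would already break boundedness of the $\AC$ calculus), and then $\xchi_L(T) = \bigoplus_k P_k$ would again be bounded.

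I would therefore take $T = D + N$ with $N$ a weighted shift (so that $T$ is bidiagonal), the coupling weights tuned so that all the idempotents $P_k$ share a common ``tail'' near the accumulation point $0$. Listing the eigenvalues in increasing real order, the design goal is that the one-sided partial sums $S_\rho = \sum_{\lambda_k \le \rho} P_k$ diverge in norm as $\rho \nearrow 0^-$, and the complementary sums diverge as $\rho \searrow 0^+$, but with opposite spikes, so that the full sums telescope to something bounded. This is the $c_0$ incarnation of the classical well-bounded-but-not-type-(B) operator $u \mapsto tu + \int_0^t u$: the obstruction is concentrated at the accumulation point rather than in any finite block.

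For the $\AC$ bound I would use summation by parts in real order: $f(T) = \sum_k f(\lambda_k) P_k$ rearranges into a sum of the increments of $f$ along $\sigma_0$ weighted by the partial sums $S_\rho$, plus boundary terms. Since $f \in \AC(\sigma_0)$ is continuous at $0$, the increments of $f$ straddling $0$ are small and should exactly absorb the two-sided divergence of $S_\rho$, while the remaining increments are summable against $\var(f)$; combined with $\norm{f}_\infty$ this should yield $\norm{f(T)} \le C\norm{f}_{\AC}$.

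Finally, assume a $\BV(\sigma_0)$ calculus $\Psi$ exists and set $P = \Psi(\xchi_L)$ with $\xchi_L = \xchi_{\sigma_0 \cap (-\infty,0]}$. By the Proposition relating $PX$ and $(I-P)X$ to the annihilation conditions defining $L_0$ and $R_0$, together with the explicit action of $T$, the operator $P$ is forced to coincide with the real-order one-sided sum $\sum_{\lambda_k \le 0} P_k$. The $\xchi_L$-weighting keeps the below-$0$ spikes but discards the cancelling above-$0$ spikes, so I would exhibit unit vectors in $c_0$ (supported on coordinates with $\lambda_k \to 0^-$) on which $\norm{Px}$ grows without bound, contradicting $\norm{P} \le \norm{\Psi}$. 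The main obstacle, and the heart of the construction, is choosing the coupling weights so that these two one-sided divergences cancel precisely for every function continuous at $0$ (securing the $\AC$ bound) yet survive for $\xchi_L$; a secondary subtlety, exactly the one flagged in the text, is that one must invoke the Proposition to show that \emph{any} extension whatsoever produces this same divergent $P$, rather than merely observing that the naive formula for $\xchi_L(T)$ diverges.
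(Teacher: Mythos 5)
Your overall strategy (non-orthogonal rank-one spectral idempotents accumulating at $0$, with a divergence that any extension must detect) is in the right spirit, but the central design goal is provably unattainable, so the construction cannot be completed as described. Every point of $\sigma_0$ other than $0$ is isolated, and for $\rho<0$ the set $\sigma_0\cap[-1,\rho]$ is a finite clopen order-interval of $\sigma_0$; hence $\xchi_{\sigma_0\cap[-1,\rho]}$ is continuous on $\sigma_0$ and has variation $1$, so it lies in $\AC(\sigma_0)=\BV(\sigma_0)\cap C(\sigma_0)$ with norm at most $2$. Since your own ansatz $f(T)=\sum_k f(\lambda_k)P_k$ forces $\xchi_{\{\lambda_j\}}(T)=P_j$ (equivalently, the unique $\AC(\sigma_0)$ calculus must send $\xchi_{\{\lambda_j\}}$ to the Riesz idempotent), additivity gives $S_\rho=\sum_{\lambda_k\le\rho}P_k=\xchi_{\sigma_0\cap[-1,\rho]}(T)$ and therefore $\norm{S_\rho}\le 2M$ uniformly in $\rho<0$, where $M$ bounds the $\AC(\sigma_0)$ calculus. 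The same applies to the complementary sums for $\rho>0$. So the one-sided partial sums cannot diverge for any operator admitting a bounded $\AC(\sigma_0)$ calculus; the analogy with $u\mapsto tu+\int_0^t u$ on $[0,1]$ breaks down because there the cut points are two-sided accumulation points of the spectrum and $\xchi_{[0,\rho]}\notin\AC[0,1]$, whereas in $\sigma_0$ the only ``bad'' cut is at $0$ itself. Consequently the contradiction you aim for (unit vectors on which $\norm{Px}$ blows up) can never materialise: any candidate $P$ extracted from an extension is automatically norm bounded by $\norm{\Psi}\,\norm{\xchi_L}_{\BV}$.

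The obstruction for $\sigma_0$ must therefore be of a different nature: not unboundedness of $\sum_{\lambda_k\le 0}P_k$, but its failure to converge to anything in the space. This is exactly what the paper exploits. It transports the problem to $C(\sigma_0)\cong c_0$ and takes $T$ to be multiplication by $t$ (which, back on $c_0$, is a diagonal operator plus a rank-one perturbation, with $P_kx=(x_0+x_k)e_k$); all the partial sums $S_\rho$ have norm $1$, yet $\sum_{\lambda_k<0}P_kx$ ``converges'' only to $\xchi_{(-\infty,0)}x$, which lies in $C(\sigma_0)$ precisely when $x(0)=0$. The first Proposition is then invoked to show that \emph{any} extension forces both $PX$ and $(I-P)X$ into $\{x: x(0)=0\}$, and $X=PX+QX$ gives the contradiction. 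If you want to salvage your approach you would need to aim at this convergence failure at the accumulation point rather than at norm divergence of the tails; as written the plan, besides leaving the weights, the summation-by-parts estimate and the identification of $P$ unproved, rests on a requirement that is inconsistent with the very hypothesis (a bounded $\AC(\sigma_0)$ calculus) you must first establish.
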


\begin{proof} First note that the map
$U: c_0 \to C(\sigma_0)$,
  \[ U(x_0,x_1,x_2,\dots)(t) =
     \begin{cases}
     x_0,        & t = 0, \\
     x_0+x_k,    & t = \frac{(-1)^k}{k}
     \end{cases}
     \]
is an isomorphism, so it suffices to construct an example on $X =
C(\sigma_0)$.  Define $T \in B(X)$ by $Tx(t) = tx(t)$. Note that
  $\sigma(T) = \sigma_0$.
For $f \in \AC(\sigma(T))$,
  $ f(T)x  = fx$
and so
  \[ \norm{f(T)} \le \norm{f}_\infty \le \norm{f}_{\AC(\sigma_0)}.
  \]
That is, $T$ has an $\AC(\sigma_0)$ functional calculus. Suppose now
that this functional calculus can be extended to a $\BV(\sigma_0)$
functional calculus. Let $\lambda = 0$ and $P = \xchi_{\sigma_0
\cap[-1,0]}(T)$ and $Q = I-P = \xchi_{\sigma_0 \cap (0,\ahalf)}$.
Then (as in the proposition)
  \[ PX \subseteq \{ x \in C(\sigma_0) \st \hbox{$f(T)x =0$ for all $f \in
  L_0$}.\}\]
Now if $f \in L_0$ and $f(T)x = 0$, then $x(-1) = x(-\athird) =
\dots = 0$. As $x \in C(\sigma_0)$, this implies that $x(0) = 0$.
That is, if $Y = \{x \in C(\sigma_0) \st x(0) = 0\}$, then $PX
\subseteq Y$. Similar reasoning shows that $QX \subseteq Y$ too. But
this implies that every element $x  = Px + Qx $ in $C(\sigma_0)$
actually lies in $Y$ which gives the required contradiction.
\end{proof}

One can construct an example on $\ell^1$ in a similar way, although
in this case we need to use the less standard space $\AC(\sigma_0)$
in order to represent the operator in a simple form.

\begin{lemma} $\ell^1$ is isomorphic to $\AC(\sigma_0)$.
\end{lemma}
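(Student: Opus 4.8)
The plan is to give an explicit concrete description of $\AC(\sigma_0)$ and then write down a coordinate isomorphism onto $\ell^1$. Since $\sigma_0 \subseteq \R$, the relevant norm reduces to the classical one, $\|f\|_{\BV(\sigma_0)} \approx \|f\|_\infty + \var(f)$, where $\var(f)$ is the ordinary total variation of $f$ over the linearly ordered set $\sigma_0$. Write the two monotone subsequences as $a_n = -1/(2n-1) \uparrow 0$ (the negative points) and $b_n = 1/(2n) \downarrow 0$ (the positive points). Because $0$ is the unique accumulation point and lies between the two subsequences, I first want to record two facts, both following from the definitions in \cite{AD1}: (i) $\AC(\sigma_0)$ consists of exactly those $f \in \BV(\sigma_0)$ that are continuous at $0$, i.e. $\lim_n f(a_n) = \lim_n f(b_n) = f(0)$ (continuity at $0$ is the only condition separating $\AC$ from $\BV$ here, since an isolated jump at $0$ cannot be approximated in variation norm); and (ii) the variation splits across $0$ as
\[ \var(f) = \sum_{n=1}^\infty |f(a_{n+1}) - f(a_n)| + \sum_{n=1}^\infty |f(b_{n+1}) - f(b_n)|, \]
the crossing terms at $0$ vanishing in the supremum because of the continuity in (i).

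With this description in hand, I would define the coordinate map $\Phi : \AC(\sigma_0) \to \C \oplus_1 \ell^1 \oplus_1 \ell^1$ by
\[ \Phi(f) = \Big( f(0),\ \big(f(a_{n+1}) - f(a_n)\big)_{n\ge 1},\ \big(f(b_{n+1}) - f(b_n)\big)_{n\ge 1} \Big), \]
noting that $\C \oplus_1 \ell^1 \oplus_1 \ell^1$ is itself isometrically $\ell^1$. By (ii), $\Phi$ is well defined with $\|\Phi(f)\|_1 = |f(0)| + \var(f)$, so $\Phi$ is bounded. It is injective because $f$ is recovered from its coordinates by telescoping toward $0$, namely $f(a_n) = f(0) - \sum_{m\ge n}(f(a_{m+1})-f(a_m))$ and similarly for the $b_n$; and it is surjective because, given any $(\alpha,(c_n),(d_n))$ in the target, those same telescoping formulas define an $f$ of finite variation with $f(a_n), f(b_n) \to \alpha = f(0)$, hence $f \in \AC(\sigma_0)$ by (i).

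It then remains to see that $\Phi$ is bounded below, equivalently that $\|\Phi(f)\|_1 = |f(0)| + \var(f)$ is equivalent to $\|f\|_{\BV(\sigma_0)}$. This is immediate: $|f(0)| \le \|f\|_\infty$ gives one inequality, while for any $t \in \sigma_0$ the estimate $|f(t)| \le |f(0)| + \var(f)$ (obtained by moving from $0$ to $t$ along $\sigma_0$) gives $\|f\|_\infty \le |f(0)| + \var(f)$, whence $\|f\|_{\BV(\sigma_0)} \le 2\big(|f(0)| + \var(f)\big)$. Thus $\Phi$ is a linear isomorphism onto $\ell^1$. The one step needing genuine care is (i)--(ii): pinning down that the abstract $\AC(\sigma_0)$ of \cite{AD1} really is the space of continuous finite-variation functions, together with the split variation formula, which is exactly where the geometry of the two sequences meeting at $0$ must be used. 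Once that is secured, the coordinate map does the rest.
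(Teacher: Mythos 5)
Your proof is correct and takes essentially the same approach as the paper: both construct an explicit isomorphism out of consecutive differences along the two monotone subsequences of $\sigma_0$, and both rest on the identification $\AC(\sigma_0)=\BV(\sigma_0)\cap C(\sigma_0)$ drawn from \cite{AD1}. The only differences are cosmetic (you anchor at $f(0)$ and use $\C\oplus_1\ell^1\oplus_1\ell^1$, whereas the paper anchors at $g(-1)$ and interleaves the two difference sequences into a single $\ell^1$ vector).
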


\begin{proof}
Define $U: \AC(\sigma_0) \to \ell^1$ by
  \[ U(g) = (g(-1),g(\ahalf)-g({\textstyle
  \frac{1}{4}}),g(-\athird)-g(-1),g({\textstyle
  \frac{1}{4}})-g({\textstyle \frac{1}{6}}),\dots) .\]
It is clear that
  \[ \norm{U(g)}_1 \le \mod{g(-1)} + \var_{\sigma_0} g \le
  \norm{g}_{\AC(\sigma_0)}. \]
The inverse map is, writing $x = (x_1,x_2,\dots)$,
  \[ U^{-1}(x)(t) = \begin{cases}
     \sum_{j=1}^n x_{2j-1}, & t = \frac{-1}{2n-1}, \\
     \sum_{j=1}^\infty x_{2j-1}, & t = 0, \\
     \sum_{j=1}^\infty x_j - \sum_{j=1}^{n-1} x_{2j}, & t =
     \frac{1}{2n},\ (n \ne 1), \\
     \sum_{j=1}^\infty x_j,  & t = \ahalf.
     \end{cases}  \]
Note that for this particular set $\sigma_0$, we have that
$\AC(\sigma_0) = \BV(\sigma_0) \cap C(\sigma_0)$. (One can readily
verify this using the results from \cite{AD1}.) Thus in order to
check that a function $g$ is in $\AC(\sigma_0)$, one need only check
that it is of bounded variation, and that $\lim_{t \to 0} g(t)$
exists and equals $g(0)$.   It is easy to check then that the image
of $U^{-1}$ is inside $\AC(\sigma_0)$. Indeed $
\norm{U^{-1}(x)}_{\AC(\sigma_0)} \le 2 \norm{x}_1$, and hence
$U^{-1}$ is continuous.
\end{proof}

The argument given in Proposition~\ref{c0-example} goes through more
or less unchanged if one replaces $C(\sigma_0)$ with
$\AC(\sigma_0)$. Thus there is an example of a real $\AC(\sigma)$
operator on $\ell^1$ whose functional calculus does not extend to
$\BV(\sigma)$.

\begin{lemma} Suppose that $T \in B(X)$ is a real $\AC(\sigma)$ operator
whose functional calculus does not extend to a $\BV(\sigma)$
functional calculus. Let $Y$ contain a complemented copy of $X$.
Then, for a suitable compact set $\sigma'$, there is a real
$\AC(\sigma')$ operator on $Y$ whose functional calculus also fails
to extend to $\BV(\sigma')$.
\end{lemma}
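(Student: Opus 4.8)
The plan is to place a copy of $T$ on the complemented copy of $X$ and a scalar on a complement, with the scalar chosen outside $\sigma$ so that the two pieces can be separated by an idempotent function. Write the complemented copy as $\iota(X)$, where $\iota\colon X\to Y$ is an isomorphism onto its range and $\pi\colon Y\to X$ is bounded with $\pi\iota = I_X$; set $E = \iota\pi$, a bounded idempotent with range $\iota(X)$ and $Z = \ker\pi$ as complement. Fix any real $c\notin\sigma$ and put $\sigma' = \sigma\cup\{c\}$, a compact subset of $\R$ in which $c$ is isolated. Define $S\in B(Y)$ by $S = \iota T\pi + c(I-E)$, so that $S\iota = \iota T$ (that is, $\iota(X)$ is invariant and $S$ acts there as a copy of $T$) while $S = cI$ on $Z$. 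If $Z=\{0\}$ the result is immediate by similarity, so I assume $Z\neq\{0\}$.

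First I would verify that $S$ is a real $\AC(\sigma')$ operator. For $f\in\AC(\sigma')$ the restriction $f|_\sigma$ lies in $\AC(\sigma)$ and the restriction map is bounded (a standard property from \cite{AD1}), so I may set $f(S) = \iota\,(f|_\sigma)(T)\,\pi + f(c)(I-E)$. Being block diagonal for $Y = \iota(X)\oplus Z$, this is an algebra homomorphism sending $\one\mapsto I$ and the identity function to $S$, with $\norm{f(S)} \le C\max(\norm{(f|_\sigma)(T)},\,\mod{f(c)}) \le C'\norm{f}_{\AC(\sigma')}$. Now suppose, for a contradiction, that this functional calculus extends to a bounded algebra homomorphism $\Psi\colon\BV(\sigma')\to B(Y)$. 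For $g\in\BV(\sigma)$ let $\tilde g\in\BV(\sigma')$ be the extension with $\tilde g(c)=0$; since $c$ is isolated, $\tilde g\in\BV(\sigma')$ with $\norm{\tilde g}_{\BV(\sigma')}\le C''\norm{g}_{\BV(\sigma)}$, and $\tilde f\in\AC(\sigma')$ whenever $f\in\AC(\sigma)$. Define $\Phi\colon\BV(\sigma)\to B(X)$ by $\Phi(g) = \pi\,\Psi(\tilde g)\,\iota$. This is bounded, and for $f\in\AC(\sigma)$ one computes $\Psi(\tilde f) = \tilde f(S) = \iota f(T)\pi$, so $\Phi(f) = \pi\iota f(T)\pi\iota = f(T)$; thus $\Phi$ extends the $\AC(\sigma)$ functional calculus of $T$ (and is unital, taking the constant function to $I_X$).

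The remaining point, and the crux, is that $\Phi$ is multiplicative, which is not automatic since $\iota\pi = E\neq I$. The key observation is that $\xchi_\sigma$ is locally constant on $\sigma'$ (equal to $1$ near $\sigma$ and to $0$ at the isolated point $c$), hence lies in $\AC(\sigma')$, so that $\Psi(\xchi_\sigma) = \xchi_\sigma(S) = \iota\,(\one)(T)\,\pi + 0 = \iota\pi = E$ is pinned down to be exactly the block projection. Since $\tilde g(c)=0$ gives $\xchi_\sigma\,\tilde g = \tilde g = \tilde g\,\xchi_\sigma$ in $\BV(\sigma')$, applying $\Psi$ yields $E\Psi(\tilde g) = \Psi(\tilde g) = \Psi(\tilde g)E$ for every $g$; and $\tilde g(c)=0$ also gives $\widetilde{g_1 g_2} = \tilde g_1\tilde g_2$. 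Hence $\Phi(g_1)\Phi(g_2) = \pi\Psi(\tilde g_1)(\iota\pi)\Psi(\tilde g_2)\iota = \pi\Psi(\tilde g_1)E\Psi(\tilde g_2)\iota = \pi\Psi(\tilde g_1)\Psi(\tilde g_2)\iota = \pi\Psi(\widetilde{g_1 g_2})\iota = \Phi(g_1 g_2)$. So $\Phi$ is a bounded unital algebra homomorphism extending the functional calculus of $T$, contradicting the hypothesis; therefore $S$ has no $\BV(\sigma')$ functional calculus. The main obstacle is exactly this multiplicativity step: it hinges on confirming that $\xchi_\sigma\in\AC(\sigma')$ so that $\Psi(\xchi_\sigma)$ is forced to equal $E$, and it is the choice $c\notin\sigma$ (making $c$ isolated) that secures this.
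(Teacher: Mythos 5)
Your proposal is correct and follows essentially the same route as the paper: form $\sigma'=\sigma\cup\{c\}$ with $c$ an isolated point outside $\sigma$, take the block-diagonal operator $T\oplus cI$ on $X\oplus Z$, use the fact that $\xchi_\sigma\in\AC(\sigma')$ forces the hypothetical $\BV(\sigma')$ calculus to respect the block decomposition, and compress to obtain a $\BV(\sigma)$ calculus for $T$, contradicting the hypothesis. Your write-up merely makes explicit the multiplicativity verification that the paper leaves as ``easy to check.''
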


\begin{proof} Let $\sigma' = \sigma \cup \{\omega\}$ where
$\omega = 1+ \max \sigma$. Write $Y = X \oplus Z$ and define $T' \in
B(Y)$ by $T' = T \oplus \omega I_Z$. Then $T'$ clearly has an
$AC(\sigma')$ functional calculus $f(T') = f(T) \oplus f(\omega) I$.
Suppose that this functional calculus admits an extension to
$BV(\sigma')$. The important point to note is that the
characteristic function $\chi_\sigma \in \AC(\sigma')$ and hence the
projection onto $X$, $P = \chi_\sigma(T') = I_X \oplus 0$, commutes
with $f(T')$ for all $f \in \BV(\sigma')$. This implies that we can
write $f(T') = U(f) \oplus V(f)$. Now $\BV(\sigma)$ embeds in a
natural way into $\BV(\sigma')$, and we shall write $\tilde f$ for
the image of $f$ under this embedding. It is easy to check that the
map $\psi:\BV(\sigma) \to B(X)$,  $f \mapsto U(\tilde f)$ is a
continuous Banach algebra homomorphism which extends the original
functional calculus for $T$, contradicting our hypothesis. Hence the
$\AC(\sigma')$ functional calculus for $T'$ can not extend.
\end{proof}

\begin{theorem}\label{main-res} If $X$ contains a complemented copy of $c_0$
or a complemented copy of $\ell^1$, then there exists a real
$\AC(\sigma)$ operator on $X$ for which the functional calculus does
not extend.
\end{theorem}

\begin{rem} This result bears a resemblance to
Theorem~4.4 of \cite{DdeLaub} which shows that under similar
hypotheses on $X$, there exists a real $\AC(\sigma)$ operator on $X$
which is not of type~(B). The operators constructed in that paper
however, do have a $\BV(\sigma)$ calculus. Theorem~4.4 of
\cite{DdeLaub} has been extended to cover an even wider range of
nonreflexive spaces \cite{CD1}, but is it not clear to us how one
might adapt these construction to the present situation.

The hypotheses of the theorem cover most of the classical separable
nonreflexive spaces, but leave the situation for operators on spaces
such as $\ell^\infty$ unclear. At present we do not have any
examples of nonreflexive spaces on which every real $\AC(\sigma)$
operator does have a $\BV(\sigma)$ functional calculus
\end{rem}

\section{Extrapolation to $L^1$}\label{S-Extrap}

Let $(\Omega,\Sigma,\mu)$ be a positive measure space, and write
$L^p$ for $L^p(\Omega,\Sigma,\mu)$.  A linear transformation $T$
defined on equivalence classes of measurable functions $x: \Omega \to \C$ will
be said to define a bounded operator on $L^p$ if $L^p \subseteq
\text{Dom}(T)$ and there exists $K_p<\infty$ such that
$\norm{Tx}_p \le K_p \norm{x}_p$ for all $x \in L^p$.
In this case we shall often write $T_p$ for the restriction of $T$
to $L^p$.


There are two main issues that need addressing when transferring
information about the functional calculus properties of an operator
acting on one space $L^p$ to a second space $L^q$. One concerns the
consistency of the functional calculus. As the above example shows,
the nonuniqueness of extensions means that one cannot expect too
much in general.

Suppose that $p,q\neq \infty$. In the case when the extended
functional calculus comes from a spectral family representation on
each space, then these functional calculi must agree. To see this,
suppose that $\sigma_p$ and $\sigma_q$ are compact subsets of
$\mathbb{R}$. Note that if $T$ defines an $\AC(\sigma_p)$ operator
on $L^p$ and an $\AC(\sigma_q)$ operator on $L^q$, then $T$ has an
$\AC[a,b]$ functional calculus on both spaces for any compact
interval $[a,b]$ containing both $\sigma_p$ and $\sigma_q$ (see
\cite[Section 2]{AD1}). Lemma~3.3 of \cite{interpex} then can be
applied directly to ensure that the spectral families agree on $L^p$
and $L^q$. A consequence of this is that
\begin{enumerate}
  \item $\sigma(T_p) = \sigma(T_q)$ ($=\sigma$ say), and
  \item $T$ defines an $\AC(\sigma)$ operator on both spaces.
\end{enumerate}
Furthermore, if one uses the spectral family to define $\BV(\sigma)$
functional calculi $\Psi_p: \BV(\sigma) \to B(L^p)$ and $\Psi_q:
\BV(\sigma) \to B(L^q)$, then $\Psi_p(f)x = \Psi_q(f)x$ for all $f
\in \BV(\sigma)$ and all $x \in L^p \cap L^q$. (Further details of
the constructions using $\BV(\sigma)$ rather than $\BV[a,b]$ are
available in \cite{As}.)

The following proposition records some standard facts about
operators which act on $L^p$ spaces.

\begin{proposition}\label{extension}
Let $1\leq r < s \leq \infty$ and let $K$ be a positive constant.
Let $S$ be a linear transformation which, for all $p\in (r,s)$,
defines a bounded operator $S_p$ on $L^p$ with $\norm{S_p}_p \leq
K$. Then
\begin{enumerate}
\item[(a)] there is a unique operator $U \in B(L^r)$ with $Ux = Sx$ for
all $x \in \bigcap_{r \le p < s} L^p$, and
\item[(b)] there is an operator $V \in B(L^s)$ with $Vx = Sx$ for
all $x \in \bigcap_{r < p \le s} L^p$.
\end{enumerate}
The operator $U$ in (a) satisfies $\norm{U}_r \le K$. The operator
$V$ in (b) is unique if $s < \infty$ and can be chosen to satisfy
$\norm{V}_s \le K$.
\end{proposition}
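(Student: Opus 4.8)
The plan is to handle the three cases — part (a), part (b) with $s<\infty$, and part (b) with $s=\infty$ — by a common density-and-continuity strategy, with the last case requiring an extra duality step that is the real obstacle.

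For part (a) I would first note that simple functions supported on sets of finite measure belong to every $L^p$ with $1 \le p \le \infty$, hence to $\bigcap_{r \le p < s} L^p$; since $r<\infty$ these are dense in $L^r$. The substance is the norm estimate on this intersection. Fix $x \in \bigcap_{r \le p < s} L^p$ and some $p_1 \in (r,s)$. For every $p \in (r,p_1]$ we have $\norm{Sx}_p \le K \norm{x}_p$, and $Sx$ is finite a.e.\ since $Sx \in L^{p_1}$. Choosing $p_n \downarrow r$ and applying Fatou's lemma to $|Sx|^{p_n} \to |Sx|^r$ gives $\int |Sx|^r \le \liminf_n \int |Sx|^{p_n} \le \liminf_n K^{p_n} \norm{x}_{p_n}^{p_n}$. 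The pointwise bound $|x|^p \le |x|^r + |x|^{p_1}$, valid for $r \le p \le p_1$, lets dominated convergence identify $\lim_n \norm{x}_{p_n}^{p_n} = \norm{x}_r^r$, so the right-hand side equals $K^r \norm{x}_r^r$ and hence $\norm{Sx}_r \le K \norm{x}_r$. The bounded extension $U$ to $L^r$ then exists by density with $\norm{U}_r \le K$, and is unique since any bounded operator agreeing with $S$ on a dense set is determined.

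For part (b) with $s < \infty$ the same scheme applies with the endpoints reversed: finite-support simple functions are dense in $L^s$, and running the Fatou/dominated-convergence estimate with $p_n \uparrow s$ produces a unique $V \in B(L^s)$ with $\norm{V}_s \le K$.

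The genuinely different case, and the main obstacle, is part (b) with $s = \infty$, since finite-support simple functions are no longer dense in $L^\infty$ (a constant function on a set of infinite measure cannot be approximated), so the direct extension fails and uniqueness is lost. Here I would pass to adjoints. For $p \in (r,\infty)$ the operators $S_p^* \in B(L^{p'})$, with $p'$ ranging over $(1,r')$, form a consistent family of norm at most $K$ — consistency follows by testing against finite-support simple functions, which are dense in every $L^p$ with $p<\infty$. Applying part (a) to this family, with the interval $(1,r')$ in place of $(r,s)$, yields a unique $W \in B(L^1)$ of norm at most $K$ extending the adjoint family. Setting $V = W^*$ gives an operator with $\norm{V}_\infty = \norm{W}_1 \le K$, and for $x \in \bigcap_{r < p \le \infty} L^p$ and $y$ a finite-support simple function one computes $\langle Vx, y\rangle = \langle x, Wy\rangle = \langle x, S_p^* y\rangle = \langle S_p x, y\rangle = \langle Sx, y\rangle$; since such $y$ are dense in $L^1$ this gives $Vx = Sx$. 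The subtlety to watch is that $W^*$ a priori lives on $(L^1)^*$, so identifying it as an operator on $L^\infty$ uses the isometric pairing $L^\infty \hookrightarrow (L^1)^*$ (an identification under $\sigma$-finiteness); and the failure of density is precisely what prevents any uniqueness statement in this case.
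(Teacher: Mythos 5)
Your proposal is correct and follows essentially the same route as the paper: the paper dismisses part (a) and the case $s<\infty$ of (b) as a standard exercise (which your Fatou/dominated-convergence argument fills in correctly), and for $s=\infty$ it uses exactly your duality argument — extend the consistent adjoint family $(S_p)^*$ to an operator on $L^1$ and take its adjoint, verifying $Vx=Sx$ by pairing against a dense subset of $L^1$. The only cosmetic difference is that the paper extends $S^*$ to $L^1$ by the direct limit $\norm{S^*y}_1=\lim_{q\to 1^+}\norm{S^*y}_q$ rather than by citing part (a).
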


\begin{rem} In what follows we shall talk about the `extension' of
$S$ to $L^r$ or $L^s$, but it should be noted that this extension
need not be proper, nor need it (if $s = \infty$) be unique. In
particular, if $L^\infty$ was in the original domain of definition
of $S$, one might have that $V \ne S_\infty$ (see
Example~\ref{eg_diffextns}).
\end{rem}

\begin{proof}
This is a standard exercise except for the case $s = \infty$.

Suppose then that $s = \infty$. We have that $(S_p)^*$ is a bounded
linear operator on $(L^p)^*$ for all $p\in (r,\infty)$, and thus
that there is a linear transformation, $S^*$, defining each bounded
linear operator $S^*_q =: (S_p)^*$ on $L^q$ for $q\in (1,r')$ (where
we use $r'$ for the conjugate exponent of $r$). For all $q\in
(1,r')$, $\norm{S^*}_q \leq K$.

If $y\in \bigcap_{1< q <r'} L^q$ then $ \norm{S^*y}_1 = \lim_{q\to
1^+} \norm{S^*y}_q \leq \lim_{q\to 1^+} K \norm{y}_q  = K \norm{y}_1
\,$ and so $S^*$ may be extended to define a bounded linear operator
$(S^*)_1$ on $L^1$. Let $V = ((S^*)_1)^*$. Clearly
$\ssnorm{V}_\infty \le K$. We want to show that $V$ is an extension
of the linear map $S$.

Suppose then that $x \in \bigcap_{r < p \le \infty} L^p$. Then, for
any $y \in \bigcap_{1 \le q < r'} L^q$,
  \begin{equation}\label{S-hat-eqn}
   \ip<y,Vx> = \ip<S^*y,x> = \ip<y,Sx>.
   \end{equation}
The norm density of $\bigcap_{1 \le q < r'} L^q$ in $L^1$ is now
sufficient to deduce that (\ref{S-hat-eqn}) is true for all $y \in
L^1$, and therefore that $Vx = Sx$.
\end{proof}

The main issue then in wanting to extend the definition of $f(T)$
from one $L^p$ space to another is showing that one does not lose
the property that the map $f \mapsto f(T)$ is an algebra
homomorphism.

\begin{theorem}\label{extfc-left}
Let $1\leq r < s \leq\infty$ and $T$ be a linear transformation
defining real $\AC(\sigma)$ operators, necessarily of type (B),
$T_p$ on $L^p$ for all $p \in (r,s)$. If the $\AC$ functional
calculi for the operators $T_p$ are uniformly bounded (by $M$ say)
for $p \in (r,s)$ then the domain of $T$ can be extended (if
necessary) so that $T$ defines a real $\AC(\sigma)$ operator on
$L^r$. Furthermore, the $AC(\sigma)$ functional calculus for $T_r$
extends to a $BV(\sigma)$ functional calculus.
\end{theorem}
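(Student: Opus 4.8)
The plan is to construct the functional calculus on $L^r$ by extending, for each $f \in \BV(\sigma)$, the operators $f(T_p)$ from the spaces $L^p$ with $p \in (r,s)$ down to $L^r$, and then to verify that the resulting map is a continuous algebra homomorphism. Since $r \ge 1$ and $r < p < s \le \infty$ force $1 < p < \infty$, each $L^p$ is reflexive, which is why each $T_p$ is of type~(B); it therefore carries a spectral family and an associated functional calculus $\Psi_p : \BV(\sigma) \to B(L^p)$ with $\Psi_p(f) = f(T_p)$ for $f \in \AC(\sigma)$. By the consistency discussion preceding Proposition~\ref{extension} (via Lemma~3.3 of \cite{interpex}), these spectral families agree on overlaps, so $\Psi_p(f)x = \Psi_q(f)x$ whenever $x \in L^p \cap L^q$. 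Hence, for each fixed $f$, the family $\{\Psi_p(f)\}_{p \in (r,s)}$ assembles into a single linear transformation $\Psi(f)$ defined at least on $\bigcap_{p \in (r,s)} L^p$.

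The first task is to secure uniform bounds. The uniform $\AC$ bound $\norm{\Psi_p(f)}_p \le M\norm{f}_{\AC(\sigma)}$ is assumed. To handle $\BV(\sigma)$ I would invoke the integration theory for type~(B) operators: the norm of the spectral family of $T_p$ is controlled by its well-bounded constant, and the $\BV(\sigma)$ functional calculus norm is in turn controlled by the spectral family norm (see \cite{Dow}). As the well-bounded constants are all at most $M$, this produces a constant $C$, independent of $p$, with $\norm{\Psi_p(f)}_p \le C\norm{f}_{\BV(\sigma)}$ for all $f \in \BV(\sigma)$ and all $p \in (r,s)$. For each fixed $f$, Proposition~\ref{extension}(a) then supplies a unique $\Psi_r(f) \in B(L^r)$ agreeing with $\Psi(f)$ on $\bigcap_{r \le p < s} L^p$ and satisfying $\norm{\Psi_r(f)}_r \le C\norm{f}_{\BV(\sigma)}$, and $\le M\norm{f}_{\AC(\sigma)}$ when $f \in \AC(\sigma)$. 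In particular, taking $f$ to be the identity function $\iota(t)=t$ extends $T$ to an operator $T_r := \Psi_r(\iota) \in B(L^r)$, the required extension of the domain.

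The substantive step is verifying that $\Psi_r$ is multiplicative. Write $D = \bigcap_{r \le p < s} L^p$, which is dense in $L^r$ (it contains the simple functions of finite-measure support, and $r < \infty$) and on which $\Psi_r(f)$ agrees with every $\Psi_p(f)$. Fix $f,g \in \BV(\sigma)$ and $x \in D$. The crucial observation is that $y := \Psi_r(g)x$ again lies in $D$: by consistency the value $\Psi_p(g)x$ is independent of $p \in (r,s)$, hence lies in $\bigcap_{p \in (r,s)} L^p$, while $\Psi_r(g)x \in L^r$ equals this common value, placing $y$ in $D$. Applying the agreement property a second time and using that each $\Psi_p$ is a homomorphism gives $\Psi_r(f)\Psi_r(g)x = \Psi_r(f)y = \Psi_p(f)y = \Psi_p(f)\Psi_p(g)x = \Psi_p(fg)x = \Psi_r(fg)x$. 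Since $\Psi_r(f)\Psi_r(g)$ and $\Psi_r(fg)$ are bounded and agree on the dense set $D$, they coincide.

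Putting this together, $\Psi_r : \BV(\sigma) \to B(L^r)$ is a continuous algebra homomorphism whose restriction to $\AC(\sigma)$ satisfies $\Psi_r(f) = f(T_r)$ with $\norm{\Psi_r(f)}_r \le M\norm{f}_{\AC(\sigma)}$; thus $T_r$ is a real $\AC(\sigma)$ operator on $L^r$ and $\Psi_r$ is the desired extension to $\BV(\sigma)$. I expect the main obstacle to be the multiplicativity step, and in particular the point that $\Psi_r(g)$ must map the dense intersection $D$ back into itself before consistency can be invoked a second time; the remaining ingredients reduce to Proposition~\ref{extension} and to the citation giving uniform $\BV(\sigma)$ bounds from uniform $\AC(\sigma)$ bounds for type~(B) operators.
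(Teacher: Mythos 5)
Your proposal is correct and follows essentially the same route as the paper: assemble the type~(B) functional calculi $\Psi_p$ with uniform $\BV$ bounds and consistency on overlaps, push each $\Psi_p(f)$ down to $L^r$ via Proposition~\ref{extension}(a), and verify multiplicativity on a dense intersection using the key observation that $\Psi_r(g)$ maps that intersection back into itself. The only differences are cosmetic (you work on $\bigcap_{r\le p<s}L^p$ where the paper fixes one $p$ and uses $L^p\cap L^r$, and you spell out why the uniform $\AC$ bound yields a uniform $\BV$ bound).
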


\begin{proof}
The hypotheses imply that for each $p \in (r,s)$, $T_p$ has a
$\BV(\sigma)$ functional calculus $\Psi_p$. As noted at the start of
this section, these maps can be chosen so that
\begin{enumerate}
  \item $\norm{\Psi_p} \le M$ for all $p \in (r,s)$, and
  \item $\Psi_p(f)x = \Psi_q(f)x$ for all $p,q \in (r,s)$, all $f
  \in \BV(\sigma)$ and all $x \in L^p \cap L^q$.
\end{enumerate}
Thus, for each $f \in \BV(\sigma)$, there is a linear transformation
$\Psi(f)$ that defines the operators $\Psi_p(f)$ for all $p\in
(r,s)$. By Proposition \ref{extension}, the domain of each $\Psi_f$
may be extended so that it defines an operator $U_f \in B(L^r)$ with
$\norm{U_f} \le M \norm{f}_{BV}$. As $L^p \cap L^r$ is dense in
$L^r$, it is easy to verify that the map $f \mapsto U_f$ is an
algebra homomorphism from $\BV(\sigma)$ into $B(L^r)$. For example,
if $f,g \in \BV(\sigma)$ and $x \in L^p\cap L^r$,
  \[ U_{fg} x = \Psi_p(fg) x = \Psi_p(f)\Psi_p(g) x = \Psi_p(f)U_g x = U_f U_g x \]
as $U_g x \in L^p \cap L^r$.

Hence $\Psi_r: BV(\sigma) \to B(L^r)$, $\Psi_r(f)=U_f$ defines a
$BV(\sigma)$ functional calculus for $T$ on $L^r$ (and this does
extend the uniquely determined $\AC(\sigma)$ functional calculus for
$T_r$).
\end{proof}

We note that in \cite[Theorem 4.3]{interpex}, it is shown that the
conditions of the above theorem are necessary and sufficient for the
extension of the map $T$ to $L^r$ to be a real $\AC(\sigma)$
operator.

As we shall see in the next section, in the $L^{\infty}$ version of
this result the uniform boundedness is not necessary. Example 5.2 of
\cite{interpex} gives an operator which has an $\BV(\sigma)$
functional calculus on $\ell^p$ for $p\in (1,\infty]$, but for which
the $\AC$ functional calculus is not uniformly bounded on these
spaces.

The Riesz-Thorin interpolation theorem gives the following
application of the theorem. This corollary covers, for example, the
case where $T$ is a real $\AC(\sigma)$ operator on $L^1$ and is
self-adjoint on $L^2$.

\begin{corollary}\label{extraptoL_1}
If, for some $r>1$, a linear map $T$ defines real $\AC(\sigma)$
operators $T_1$ and $T_s$ on $L^{1}$ and $L^{s}$ respectively, then
the operator $T_1$ on $L^{1}$ has a $\BV(\sigma)$ functional
calculus.
\end{corollary}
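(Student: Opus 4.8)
The plan is to derive this as a direct application of Theorem~\ref{extfc-left} with $r=1$, where the role of the interval $(r,s)$ is played by a suitable subinterval of $(1,s)$, and where the Riesz--Thorin interpolation theorem supplies the uniform boundedness hypothesis that the theorem requires. The starting data are that $T$ defines a real $\AC(\sigma)$ operator $T_1$ on $L^1$ and a real $\AC(\sigma)$ operator $T_s$ on $L^s$ for some $s>1$; I may as well assume $s<\infty$, since if $T$ is an $\AC(\sigma)$ operator on $L^s$ it is automatically an $\AC[a,b]$ operator there (and on $L^1$) for any interval $[a,b]\supseteq\sigma$, so one can work on a common interval and the spectra will coincide as recorded at the start of the section.

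First I would fix an arbitrary $f\in\AC(\sigma)$ and consider the operator $f(T)$, which is bounded on $L^1$ with $\norm{f(T_1)}_1\le M_1\norm{f}_{\AC(\sigma)}$ and on $L^s$ with $\norm{f(T_s)}_s\le M_s\norm{f}_{\AC(\sigma)}$, where $M_1,M_s$ are the bounds of the respective $\AC(\sigma)$ functional calculi. On the dense subspace $L^1\cap L^s$ these two operators agree (both equal the single linear transformation $f(T)$ applied to such an $x$), so $f(T)$ is a single linear map that is simultaneously bounded on $L^1$ and on $L^s$. The Riesz--Thorin theorem then gives, for every $p\in(1,s)$ with $1/p = (1-\theta)/1 + \theta/s$, the interpolation bound
  \[ \norm{f(T_p)}_p \le \norm{f(T_1)}_1^{1-\theta}\,\norm{f(T_s)}_s^{\theta} \le M_1^{1-\theta}M_s^{\theta}\,\norm{f}_{\AC(\sigma)} \le \max(M_1,M_s)\,\norm{f}_{\AC(\sigma)}. \]
This shows that $T$ defines an $\AC(\sigma)$ operator $T_p$ on each $L^p$, $p\in(1,s)$, and crucially that these functional calculi are \emph{uniformly} bounded, by $M=\max(M_1,M_s)$, independently of $p$. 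Since each $L^p$ with $1<p<\infty$ is reflexive, each $T_p$ is automatically of type~(B).

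With the uniform bound in hand, I would apply Theorem~\ref{extfc-left} with left endpoint $r=1$ and right endpoint $s$: the operators $T_p$ for $p\in(1,s)$ are real $\AC(\sigma)$ operators of type~(B) with uniformly bounded $\AC$ functional calculi, so the theorem extends the domain of $T$ to produce a real $\AC(\sigma)$ operator on $L^1$ whose $\AC(\sigma)$ functional calculus extends to a $\BV(\sigma)$ functional calculus $\Psi_1:\BV(\sigma)\to B(L^1)$. The only remaining point is to confirm that this agrees with the given operator $T_1$ on $L^1$: by part~(a) of Proposition~\ref{extension} the extension to $L^1$ is the unique bounded operator agreeing with $T$ on $\bigcap_{1\le p<s}L^p$, and the hypothesised $T_1$ is such an operator, so the uniqueness forces them to coincide. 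Hence $T_1$ itself carries the $\BV(\sigma)$ functional calculus, which is the claim.

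I expect the main obstacle to be purely bookkeeping rather than conceptual: one must be careful that the interpolation is applied to the fixed linear transformation $f(T)$ (not to the abstract functional calculus maps), that $L^1\cap L^s$ is genuinely dense in each intermediate $L^p$ so that the interpolated operator really is $f(T_p)$, and that the spectra on $L^1$ and $L^s$ may be taken equal so that one uniform $\BV(\sigma)$ makes sense. None of these is difficult, but the interpolation step must respect that Riesz--Thorin bounds a single consistently-defined map, which is exactly what the agreement on $L^1\cap L^s$ guarantees.
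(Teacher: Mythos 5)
Your proposal is correct and follows exactly the route the paper takes: Riesz--Thorin applied to each $f(T)$ (which agrees on the dense intersection $L^1\cap L^s$) yields uniform bounds for the $\AC(\sigma)$ functional calculi on $L^p$ for $p\in(1,s)$, and Theorem~\ref{extfc-left} with left endpoint $1$ then supplies the $\BV(\sigma)$ functional calculus on $L^1$. The paper's proof is just a two-line version of this same argument; your additional checks (type~(B) from reflexivity, and identifying the extension with $T_1$ via the uniqueness in Proposition~\ref{extension}(a)) are sound bookkeeping that the paper leaves implicit.
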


\begin{proof}
The hypotheses imply that $T$ defines a real $\AC(\sigma)$ operator
on $L^p$ for all $p \in (1,s)$ with a uniform bound on the
$\AC(\sigma)$ functional calculus on these spaces. The result now
follows by Theorem~\ref{extfc-left}.
\end{proof}

\section{Operators on $L^\infty$}\label{S-Linf}

The situation for operators on $L^\infty$ is quite different to that
for operators on $L^1$. The main problem is not that operators can
not be extended to $L^\infty$, but rather that the extensions need
not be unique. In particular, transformations that give rise to
different operators on $L^\infty$ may give identical operators on
another $L^p$ space.

\begin{example}\label{eg_diffextns}
Choose a Banach limit in $L \in (l^\infty)^*$. For $x \in
\ell^{\infty}$, define $Tx := (Lx, 0, 0, \ldots)$. Let $1\leq
r<\infty$; then $T$ defines a bounded linear operator on $\ell^r$
and on $\ell^\infty$. On $\ell^r$, $T$ is the zero operator; on
$\ell^\infty$, $\norm{T}_\infty = 1$. The (non-proper) extension of
$T$ to $\ell^p$ for $r< p <\infty$ is the zero operator and applying
Proposition \ref{extension} to this map on $\ell^p$ for $r\leq p
<\infty$ yields the zero operator on $\ell^\infty$. Of course, the
zero operator is a real $\AC(\{0\})$ operator with a $\BV(\{0\})$
functional calculus. The original map $T$ is not a real
$\AC(\sigma)$ operator on $\ell^\infty$ however, since it is a
nonzero nilpotent operator.
\end{example}

\begin{theorem}\label{extfc-right}
Let $1 < r < \infty$ and $T$ be a linear transformation defining
real $\AC(\sigma)$ operators (of type (B)) $T_p$ on $L^p$ for all $p
\in (r,\infty)$. If the $\AC(\sigma)$ functional calculi for the
operators $T_p$ are uniformly bounded (by $M$ say) for $p \in (r,s)$
then the domain of $T$ can be extended (if necessary) so that $T$
defines a real $\AC(\sigma)$ operator $T_\infty$ on $L^\infty$.
Furthermore, the $\AC(\sigma)$ functional calculus for $T_\infty$
extends to a $\BV(\sigma)$ functional calculus.
\end{theorem}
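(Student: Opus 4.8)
The plan is to follow the template of the proof of Theorem~\ref{extfc-left}, but to extend to the right-hand endpoint $s=\infty$, using part~(b) of Proposition~\ref{extension} in place of part~(a). As in that proof, the hypotheses together with the remarks at the start of Section~\ref{S-Extrap} provide, for each $p\in(r,\infty)$, a $\BV(\sigma)$ functional calculus $\Psi_p$ with $\norm{\Psi_p}\le M$ and with $\Psi_p(f)x=\Psi_q(f)x$ for all $x\in L^p\cap L^q$. Hence for each $f\in\BV(\sigma)$ the operators $\Psi_p(f)$ are restrictions of a single linear transformation $\Psi(f)$, and Proposition~\ref{extension}(b) (applied with $K=M\norm{f}_{\BV}$) extends $\Psi(f)$ to an operator $V_f\in B(L^\infty)$ with $\norm{V_f}_\infty\le M\norm{f}_{\BV}$. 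The one genuinely new point, and the main obstacle, is to show that $f\mapsto V_f$ is an algebra homomorphism: the density argument used for $L^r$ is unavailable here, since $L^p\cap L^\infty$ is not norm dense in $L^\infty$.

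To circumvent this I would exploit the explicit form of the extension in Proposition~\ref{extension}(b), namely $V_f=\bigl((\Psi(f)^*)_1\bigr)^*$, together with the commutativity of $\BV(\sigma)$. As spelled out in that proof, the adjoint transformation $\Psi(f)^*$ restricts to bounded operators on $L^q$ for $q\in(1,r')$, and these are uniformly bounded (by $M\norm{f}_{\BV}$) and consistent on intersections. Fixing $q\in(1,r')$ and writing $p=q'\in(r,\infty)$, the operator $\Psi(f)^*$ acts on $L^q$ as $\Psi_p(f)^*$; since $\BV(\sigma)$ is commutative the $\Psi_p(f)$ commute, so their adjoints commute and $\Psi_p(fg)^*=\Psi_p(g)^*\Psi_p(f)^*=\Psi_p(f)^*\Psi_p(g)^*$, whence $f\mapsto\Psi(f)^*$ is a bounded homomorphism into $B(L^q)$ for each such $q$. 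Extending this family to the left-hand endpoint $q=1$ exactly by the argument in the proof of Theorem~\ref{extfc-left} — where now the relevant density, that of $\bigcap_{1\le q<r'}L^q$ in $L^1$, does hold — produces a bounded algebra homomorphism $\Phi_1:\BV(\sigma)\to B(L^1)$ with $\Phi_1(f)=(\Psi(f)^*)_1$.

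It then remains to read off the conclusion from $V_f=\Phi_1(f)^*$. Taking adjoints of the identity $\Phi_1(fg)=\Phi_1(f)\Phi_1(g)$, and using commutativity once more, gives $V_{fg}=\Phi_1(g)^*\Phi_1(f)^*=V_fV_g$, so $f\mapsto V_f$ is the desired bounded homomorphism $\BV(\sigma)\to B(L^\infty)$; consistency, $V_fx=\Psi_p(f)x$ for $x\in L^p\cap L^\infty$, follows from the weak-$*$ identity established in the proof of Proposition~\ref{extension}(b). Writing $T_\infty:=V_f$ for $f$ the function $t\mapsto t$ yields the extension of $T$ to $L^\infty$, and restricting $f\mapsto V_f$ to $\AC(\sigma)$ exhibits a bounded $\AC(\sigma)$ functional calculus for $T_\infty$, so $T_\infty$ is a real $\AC(\sigma)$ operator whose calculus extends to $\BV(\sigma)$. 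The crux is thus the passage to adjoints: once multiplicativity is transported to the $L^1$ side, the homomorphism property on $L^\infty$ follows formally by taking adjoints, and the remaining verifications — consistency of the adjoint families and of $V_f$ — are routine.
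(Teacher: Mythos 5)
Your proposal is correct and follows essentially the same route as the paper: both pass to the adjoint family on $L^1$ (where the density of $\bigcap_{1\le q<r'}L^q$ makes the multiplicativity argument work), and then recover the homomorphism property of $f\mapsto V_f$ on $L^\infty$ by taking adjoints. The paper merely states that $U_{fg}=U_gU_f$ ``may be readily verified'' where you spell out the verification; the underlying argument is the same.
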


\begin{proof}
As in Theorem \ref{extfc-left}, for each $f\in\BV(\sigma)$ there
exists a linear transformation $\Psi(f)$ defining operators
$\Psi_p(f)$ for all $p\in (r,\infty)$, and $\Psi_p : \BV(\sigma) \to
B(L^p)$ is a functional calculus for $T_p$.

Using Proposition \ref{extension}, we extend the domain of each map
$\Psi(f)$ so that it defines an operator $V_f\in B(L^\infty)$. By
construction, each $V_f$ is the adjoint of an operator $U_f \in
B(L^1)$, and it may be readily verified that for each $f,g\in
\BV(\sigma)$, $U_{f+g}= U_f + U_g$ and $U_{fg} = U_gU_f$. It follows
that the map $f\mapsto V_f$ is an algebra homomorphism: for example,
for $f,g \in \BV(\sigma)$, $y\in L^1$, $x \in L^\infty$,
\[ \< y, V_{fg}x \> = \< U_{fg}y,x \> = \< U_gU_f y,x \> = \< U_f y, V_g x\> = \<y,V_f V_g x\> \,, \]
so that $V_{fg} = V_f V_g$.

Hence $\Psi_\infty: \BV(\sigma) \to B(L^\infty)$, $\Psi_\infty(f) =
V_f$, defines a $\BV(\sigma)$ functional calculus for $T$ on
$L^\infty$.
\end{proof}

From a practical point of view, the problem is more often to
determine whether a given real $\AC(\sigma)$ operator on $L^\infty$
has a $\BV(\sigma)$ functional calculus. There is in fact no known
example of a real $AC(\sigma)$ operator without a $\BV$ functional
calculus on any $L^\infty$ space. A candidate for such an operator
is
  \[ Tx(t) = tx(t) + \int_t^1 x(s) \, ds, \qquad x \in
  L^\infty[0,1].\]
Showing that this operator does not have a $\BV[0,1]$ functional
calculus would require a different sort of proof to those provided
in Section~\ref{no-extensions} since the functional calculus
\emph{does} extend to the algebra of left-continuous functions of
bounded variation whose continuous singular part is zero. In
particular, and unlike the examples in Section~\ref{no-extensions},
the functional calculus can be extended to include the
characteristic functions $\xchi_{[0,\lambda]}$. It might be noted
that this extension is not constructive. Further details can be
found in \cite[Chapter 15]{Dow}.

Proving an ``$L^\infty$ version'' of Corollary \ref{extraptoL_1} is
problematic. Given a linear transformation $T$ which defines real
$\AC(\sigma)$ operators on $L^\infty$ and $L^r$ for some $r <
\infty$,  extrapolating the operators $\Psi_p(f) = f(T_p)$ ($r \le p
< \infty$) to $L^\infty$ using Theorem~\ref{extfc-right} may not
give a homomorphism which even matches the $\AC(\sigma)$ functional
calculus for $T_\infty$.

\begin{example} As a variant of Example~\ref{eg_diffextns},
consider the linear transformation $Sx := (Lx,Lx,Lx,\dots)$. In this
case $S_p$ is a real $\AC(\{0,1\})$ operator on each $\ell^p$ space.
Indeed each $S_p$ is of type~(B). For $p < \infty$, the
$\BV(\sigma)$ functional calculus for $S_p$ is given by $f(S_p) =
f(0)I$. This has many extensions to $\ell^\infty$, only one of which
is the $\BV(\sigma)$ functional calculus for $T_\infty$,
$f(S_\infty) = f(0)(I-S_\infty) + f(1) S_\infty$. Note that this
example shows that the spectral consistency results listed at the
start of Section~\ref{S-Extrap} do not hold when one of the spaces
is an $L^\infty$ space. In particular, in this example $\sigma(S_p)
\ne \sigma(S_\infty)$.
\end{example}

We finish with two positive results which cover a wide range of
concrete examples.

\begin{proposition}\label{adjoints}
Let $1 \leq r < \infty$ and $T$ be a linear transformation defining
real $AC(\sigma)$ operators $T_r \in B(L^r)$ and $T_\infty \in
B(L^\infty)$. If $T_\infty = S^*$ for some operator $S \in L^1$,
then $T_\infty$ has a $\BV(\sigma)$ functional calculus.
\end{proposition}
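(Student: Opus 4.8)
The plan is to pass to the predual $L^1$, build a $\BV(\sigma)$ functional calculus for the pre-adjoint $S$ there, and then transport it back to $L^\infty$ by taking adjoints. First I would check that $S$ is itself a real $\AC(\sigma)$ operator on $L^1$. For any polynomial $p$ one has $p(S)^* = p(S^*) = p(T_\infty)$ (the first equality because $(S^k)^*=(S^*)^k$), so the isometry of the adjoint map gives $\norm{p(S)}_{B(L^1)} = \norm{p(T_\infty)}_{B(L^\infty)} \le M \norm{p}_{\AC(\sigma)}$, where $M$ bounds the $\AC(\sigma)$ calculus of $T_\infty$. Since $\sigma\subseteq\R$, the polynomials are dense in $\AC(\sigma)$, so $p\mapsto p(S)$ extends to a bounded homomorphism $\AC(\sigma)\to B(L^1)$; together with $\sigma(S)=\sigma(S^*)=\sigma(T_\infty)\subseteq\sigma$, this exhibits $S$ as a real $\AC(\sigma)$ operator on $L^1$.

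Because $L^1$ is nonreflexive, this alone does not furnish a $\BV(\sigma)$ calculus, and here the operator $T_r$ must enter, through duality. The adjoint $(T_r)^*\in B(L^{r'})$ (where $r'$ is conjugate to $r$) is again a real $\AC(\sigma)$ operator: its calculus is $f\mapsto f(T_r)^*$, which is a \emph{homomorphism} rather than merely an anti-homomorphism precisely because $\AC(\sigma)$, and hence the commuting image $\{f(T_r)\}$, is commutative. A routine duality computation, using that $T_r\phi = T\phi = T_\infty\phi$ for $\phi\in L^r\cap L^\infty$, shows that $(T_r)^*$ and $S$ agree on $L^1\cap L^{r'}$; consequently they are the restrictions of a single linear transformation $\tilde S$ on $L^1 + L^{r'}$ that defines real $\AC(\sigma)$ operators on both $L^1$ and $L^{r'}$. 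Since $r<\infty$ forces $r'>1$, Corollary~\ref{extraptoL_1} applies to $\tilde S$ (with second exponent $s=r'\in(1,\infty]$) and yields a $\BV(\sigma)$ functional calculus $\Psi_1:\BV(\sigma)\to B(L^1)$ for $S$.

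Finally I would adjoint back. Set $\Psi_\infty(f):=\Psi_1(f)^*$. As $\Psi_1$ is a homomorphism out of the commutative algebra $\BV(\sigma)$, its image is commutative, so the order-reversing adjoint becomes order-preserving and $f\mapsto\Psi_1(f)^*$ is a bounded algebra homomorphism $\BV(\sigma)\to B(L^\infty)$. For $f\in\AC(\sigma)$ we have $\Psi_1(f)=f(S)$, hence $\Psi_\infty(f)=f(S)^* = f(S^*) = f(T_\infty)$ (the identity $f(S)^*=f(S^*)$ holding for polynomials and extending by density of polynomials in $\AC(\sigma)$ together with continuity of the calculus). Thus $\Psi_\infty$ extends the $\AC(\sigma)$ functional calculus of $T_\infty$ and is the required $\BV(\sigma)$ functional calculus; in particular $\Psi_\infty(\mathrm{id})=S^*=T_\infty$.

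I expect the crux to be the second paragraph. Two points need care there: verifying that $S$ and $(T_r)^*$ are genuinely restrictions of one transformation, which requires upgrading the weak identity $\int\phi\,[(T_r)^*\psi - S\psi]=0$ (for $\phi\in L^r\cap L^\infty$) to an almost-everywhere identity, and so presumes the test functions separate points of $L^{r'}+L^1$ (true for $\sigma$-finite $\mu$, where bounded functions with finite-measure support suffice); and confirming that the adjoint of a real $\AC(\sigma)$ operator is again one, where the commutativity of $\AC(\sigma)$ is exactly what rescues the otherwise anti-multiplicative adjoint. The first and third paragraphs are then formal.
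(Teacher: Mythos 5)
Your proof is correct, but it is organised quite differently from the paper's, which runs entirely on the primal side: there one interpolates $T$ between $L^r$ and $L^\infty$ to get uniformly bounded $\AC(\sigma)$ calculi on $L^p$ for $p\in[r,\infty)$, applies Theorem~\ref{extfc-right} to produce a $\BV(\sigma)$ calculus for \emph{some} operator $V\in B(L^\infty)$, and uses the hypothesis $T_\infty=S^*$ only at the last moment, to identify $V$ with $T_\infty$ via the uniqueness of the $L^1$ extension of the dual family $(T_p)^*$. You instead dualise at the outset: the isometry of the adjoint map shows that the pre-adjoint $S$ inherits a bounded $\AC(\sigma)$ calculus on $L^1$ from $T_\infty$ (a fact the paper never needs to state), you pair $S$ with $(T_r)^*$ on $L^{r'}$ so that Corollary~\ref{extraptoL_1} applies, and you adjoint the resulting $\BV(\sigma)$ calculus back up, using commutativity of the range to keep the map multiplicative. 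Under the hood the two arguments build the same object --- Theorem~\ref{extfc-right} also realises its calculus as adjoints of operators on $L^1$ --- but yours makes the role of each hypothesis more transparent ($T_\infty=S^*$ transfers the $\AC$ calculus to $L^1$; $T_r$ is what unlocks the extrapolation theorem there), at the cost of having to verify the consistency of $S$ with $(T_r)^*$ on $L^1\cap L^{r'}$, which you correctly flag as the step needing the test functions in $L^r\cap L^\infty$ to separate points (automatic for $\sigma$-finite $\mu$, and handled implicitly in the paper by the uniqueness clause of Proposition~\ref{extension}(a)). One small bookkeeping point: when $r=1$ you invoke Corollary~\ref{extraptoL_1} with $s=r'=\infty$; this is legitimate since its proof (Riesz--Thorin plus Theorem~\ref{extfc-left}) allows $s=\infty$, but it is worth saying so explicitly, as the corollary is phrased with a finite-looking exponent.
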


\begin{proof}
By the Riesz-Thorin interpolation theorem, $T$ defines real
$\AC(\sigma)$ operators $T_p \in B(L^p)$ for all $p\in [r,\infty)$,
with a uniform bound on the $\AC$ functional calculi of these
operators. Using Proposition \ref{extension} to construct an
operator $U^* \in B(L^\infty)$, we see that $U = S$, as the
extension of the family $(T_p)^*$ to $L^1$ is unique. Thus $U^* =
T_\infty$, and by Theorem \ref{extfc-right}, $T_\infty$ is a real
$\AC(\sigma)$ operator with a $\BV$ functional calculus.
\end{proof}

Proposition~\ref{adjoints} would apply, for example, to the case
where $Tx = Ax$ for some self-adjoint infinite matrix $A$ acting on
$x \in \ell^p$. If $T$ defines a real $AC(\sigma)$ operator on
$\ell^\infty$, and is bounded on $\ell^2$, then, as every
self-adjoint operator on $\ell^2$ is an $AC(\sigma)$ operator, $T$
must admit a $\BV(\sigma)$ functional calculus on $\ell^\infty$.

\begin{proposition}\label{finite-measure}
Suppose that $(\Omega,\Sigma,\mu)$ is a finite measure space. Let $1
\leq r < \infty$ and $T$ be a linear transformation defining real
$\AC(\sigma)$ operators $T_r \in B(L^r)$ and $T_\infty \in
B(L^\infty)$. Then $T_\infty$ has a $\BV(\sigma)$ functional
calculus.
\end{proposition}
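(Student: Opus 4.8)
The plan is to manufacture a whole scale of $\AC(\sigma)$ operators between the two given endpoints, feed it into Theorem~\ref{extfc-right}, and then --- this being the one point where the finiteness of $\mu$ is essential --- verify that the operator the theorem produces on $L^\infty$ is the \emph{given} $T_\infty$ and not some other extension.

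First I would exploit that $\mu(\Omega) < \infty$ gives the continuous inclusions $L^\infty \subseteq L^p \subseteq L^r$ for $r \le p \le \infty$, so that $L^\infty$ is a common dense subspace of every $L^p$ on which the two restrictions of $T$ coincide. Interpolating $T$ between $L^r$ and $L^\infty$ by the Riesz--Thorin theorem then produces bounded operators $T_p$ on $L^p$ for every $p \in (r,\infty)$. Applying the same interpolation to an arbitrary polynomial $f$, viewed as the single transformation $f(T)$ restricting to $f(T_r)$ and $f(T_\infty)$, gives $\norm{f(T_p)} \le \norm{f(T_r)}^{1-\theta}\norm{f(T_\infty)}^\theta \le M \norm{f}_{\AC(\sigma)}$, where $M$ is the larger of the two endpoint $\AC$ functional calculus bounds and $\theta = 1 - r/p$. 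Because the polynomials are dense in $\AC(\sigma)$, this uniform estimate passes to all of $\AC(\sigma)$, so each $T_p$ is a real $\AC(\sigma)$ operator with $\AC$ functional calculus bounded by $M$; acting on the reflexive spaces $L^p$ with $1<p<\infty$, these operators are automatically of type~(B).

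Next I would invoke Theorem~\ref{extfc-right}. If $r = 1$ I would first replace $r$ by some fixed value in $(1,\infty)$, which only shrinks the scale of available operators; the uniform bound and type~(B) property persist. The theorem then yields an operator $\tilde T_\infty \in B(L^\infty)$, namely the Proposition~\ref{extension}(b) extension of $T$, equipped with a $\BV(\sigma)$ functional calculus $\Psi_\infty$.

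The crux, and where the finite measure hypothesis really does the work, is the identification $\tilde T_\infty = T_\infty$. This is precisely the place where the examples preceding this proposition caution that $L^\infty$ extensions can be non-unique. Here, however, finiteness of $\mu$ forces $\bigcap_{r<p\le\infty} L^p = L^\infty$, so Proposition~\ref{extension}(b) gives $\tilde T_\infty x = Tx = T_\infty x$ for \emph{every} $x \in L^\infty$, whence $\tilde T_\infty = T_\infty$. The same collapse of the intersection shows that for each $f \in \AC(\sigma)$ the operator $\Psi_\infty(f)$ agrees with $f(T)$ throughout $L^\infty$, hence with the (unique) $\AC(\sigma)$ functional calculus of $T_\infty$. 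Thus $\Psi_\infty$ is a $\BV(\sigma)$ functional calculus extending that of $T_\infty$, as required. The main obstacle I anticipate is exactly this last identification; the transfer of the Riesz--Thorin bound from polynomials to all of $\AC(\sigma)$ and the verification that $f \mapsto \Psi_\infty(f)$ remains multiplicative are routine given the results already established.
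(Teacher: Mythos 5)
Your argument is correct and follows the paper's overall strategy --- interpolate between the $L^r$ and $L^\infty$ endpoints to obtain a uniformly bounded family of type-(B) real $\AC(\sigma)$ operators $T_p$ for $r<p<\infty$, then use the finiteness of $\mu$ to pass to $L^\infty$ --- but the two proofs diverge at the final step. The paper does not route the construction through Theorem~\ref{extfc-right}: since $\mu(\Omega)<\infty$ gives $L^\infty\subseteq L^p$, each $\Psi_p(f)$ is already defined on all of $L^\infty$, and the uniform bound $\norm{\Psi_p(f)x}_p\le M\norm{f}_{\BV}\norm{x}_p$ shows (in the limit $p\to\infty$) that this restriction maps $L^\infty$ boundedly into $L^\infty$, yielding $U_f$ directly; multiplicativity is then immediate because $U_gx\in L^\infty\subseteq L^p$. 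You instead run the full duality machinery of Theorem~\ref{extfc-right}, which builds $V_f$ as the adjoint of an operator on $L^1$, and are therefore obliged to identify the resulting $\tilde T_\infty$ with the given $T_\infty$ and $\Psi_\infty(f)$ with $f(T_\infty)$ for $f\in\AC(\sigma)$; you discharge this correctly via the collapse $\bigcap_{r<p\le\infty}L^p=L^\infty$ in Proposition~\ref{extension}(b). Your route is slightly longer and needs the $r=1$ workaround that the paper avoids, but it has the virtue of making explicit the identification step that the paper leaves implicit: its proof never actually verifies that $U_f=f(T_\infty)$ for $f\in\AC(\sigma)$, which is exactly the point the preceding examples show cannot be taken for granted on $L^\infty$.
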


\begin{proof}
By interpolation $T$ defines a real $AC(\sigma)$ operator of
type~(B) on $L^p$ for all $p \in [r,\infty)$. Let $\Psi_p$ denote
the $\BV(\sigma)$ functional calculus for $T_p$. Then there exists a
constant $M$ such that $\norm{\Psi_p(f)}_p \le M \norm{f}_{BV}$ for
all $p \in [r,\infty)$. Suppose now that $f \in \BV(\sigma)$. As
$L^\infty \subseteq \bigcap_{r \le p < \infty} L^p$, there exists an
operator $U_f \in B(L^\infty)$ such that $\Psi_p(f)x = U_f x$ for
all $x \in L^\infty$. Further $\norm{U_f}_\infty \le M
\norm{f}_{BV}$. As in the proof of Theorem~\ref{extfc-right}, one
can show that the map $f \mapsto U_f$ is a $BV(\sigma)$ functional
calculus for $T_\infty$.
\end{proof}

%

\end{document}